\newtheorem*{theorem}{Theorem}
\theoremstyle{definition}
\def\R{{\mathbb R}}
\def\PP{{\mathbb P}}
\title{Braids act on configurations of lines}
\author{Vassily Olegovich Manturov \footnote{Moscow Institute of Physics and Technology}}
\begin{document}

\maketitle

To my mother Elena Ivanovna Manturova, with love.

\begin{abstract}
Similar pictures appear in various branches of mathematics. Sometimes
this similarity gives rise to deep theorems.

Mentioning such a similarity between hexagonal tilings, cubes  in
3-space, configurations of lines and braid groups, we
prove that braids act on configurations of lines.

\end{abstract}

Keywords: Draid, Desargues, triangulation, cluster algebra,

AMS MSC: 57M25, 57M27,
 51A20, 05E14, 14N20, 51M15

\section{Introduction and the main result}
Braids correspond to dynamics of points moving on the plane.

Let us consider braids on $\R{}P^{2}$ and, by using duality, we shall move
$n$ pairwise distinct lines. Generically, these lines intersect in $n\choose 2$ points.
These lines split the plane into regions.

By Euler characteristic reasons, these lines split the projective plane into
${n \choose 2}+1$ regions.

If $n$ is even then the regions can be coloured in two colours in a checkerboard fashion.

From now on, we assume $n$ to be even.

We get a bicoloured graph, see Fig.\ref{below}.

\begin{figure}
\centering\includegraphics[width=150pt]{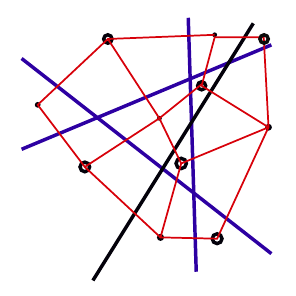}
\caption{Line configuration and quadrilateral tiling}
\label{below}
\end{figure}

As lines move generically, the dual graph undergoes a hexagonal flip,
see \footnote{Some pictures are kindly borrowed from the paper
\cite{FominPylavskyy}} Fig. \ref{Desarguesflip}.

With a tiling of a $2n$-gon, associate a configuration of points and lines.
With a hexagonal flip associate a Desargues flip. In Fig.\ref{Desargues}, one can see the illustration of the famous Desargues theorem.

\begin{figure}
\centering\includegraphics[width=150pt]{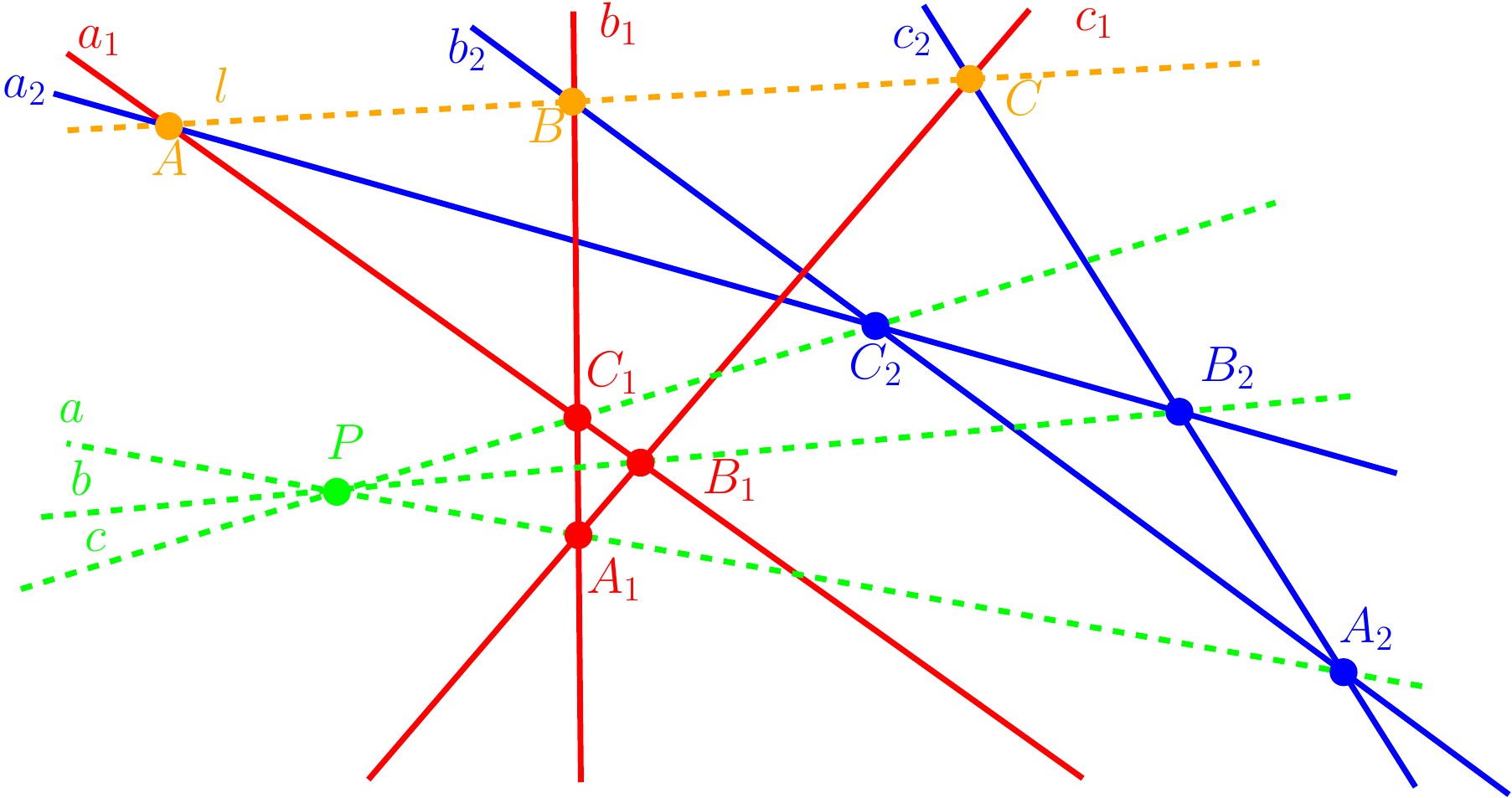}
\caption{The Desargues configuration of lines}
\label{Desargues}
\end{figure}

Passing from a point where three concurrent lines meet to the corresponding line can
be encoded in terms of diagrams of dots. In Fig. \ref{Desarguesflip} lines and points of
the configuration are depicted by black and white dots, and the fact that a point is incident
to a line is reflected by an edge connecting one dot to another.

{\bf
This means that starting from a generic configuration of lines and points whose
``dotted picture'' contains a hexagon, we replace three concurrent lines with three points
on the same line or vice versa. So, with each braid we associate a sequence of
Desargues flips.
}

\begin{figure}
\centering\includegraphics[width=150pt]{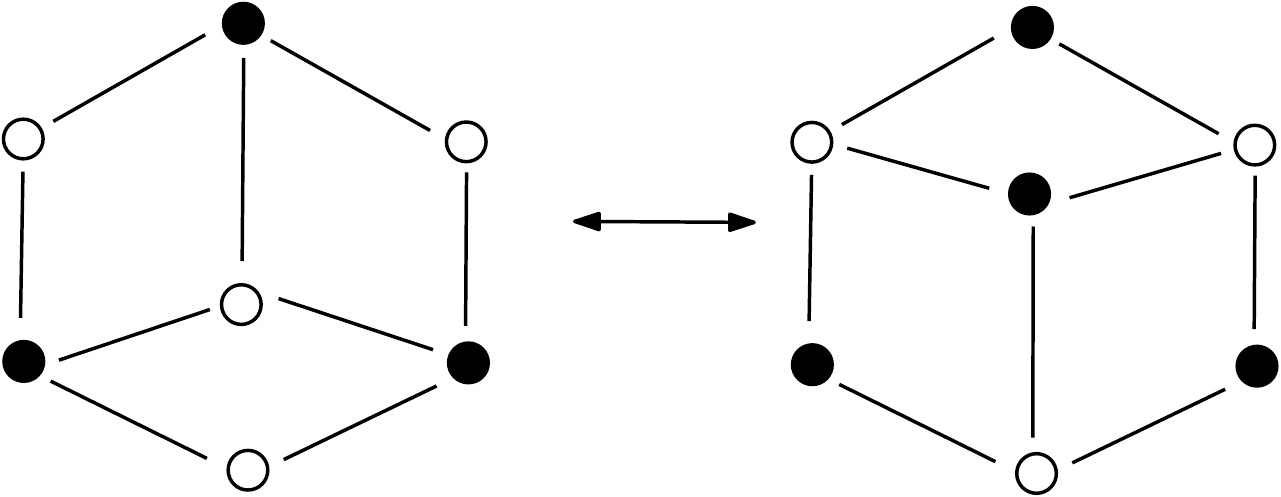}
\caption{The Desargues flip}
\label{Desarguesflip}
\end{figure}


Now the new heroes come into play: the notions of {\bf tile} and {\bf coherent tile}.

Here we just cite Fomin and Pylavskyy \cite{FominPylavskyy} verbatim and cite their paper in quotes.

``

Let $\PP$ be a real or complex finite-dimensional projective space.
(In this paper, we focus on applications where $\dim\PP=2$ or $\dim\PP=3$, i.e., $\PP$~is a plane or a 3-space.)

We denote by $\PP^*$ the set of hyperplanes in~$\PP$.
In particular, when $\PP$ is a plane, the elements of $\PP^*$ are lines.
A~point $A\in \PP$ and a hyperplane $\ell\in\PP^*$ are called \emph{incident} to each other if $A\in\ell$.

We denote by $(AB)$ (resp., $(ABC)$) the line passing through two distinct points $A$ and~$B$
(resp., the plane passing through distinct points $A, B, C$).

Throughout this paper, a  \emph{tile} is a topological quadrilateral
(that is, a closed oriented disk with four marked points on its boundary)
whose vertices are clockwise labeled $A, \ell, B, m$,
where $A,B\in\PP$ are points and $\ell,m\in\PP^*$ are hyperplanes:
\begin{equation}
\label{eq:AlBm}
\centering\includegraphics[width=60pt]{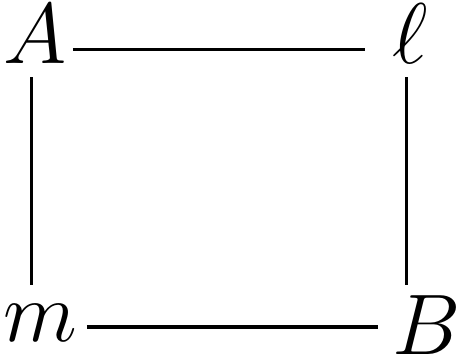}
\end{equation}
Such a tile is called \emph{coherent} if
\begin{itemize}
\item neither $A$ nor $B$ is incident to either $\ell$ or~$m$;
\item either $A=B$ or $\ell=m$ or
else the line $(AB)$ and the codimension~$2$ subspace $\ell\cap m$ have a nonempty intersection.
\end{itemize}

In the case of the projective plane ($\dim\PP=2$), a coherent tile involves
two points $A$, $B$ and two lines $\ell$, $m$ not incident to them such that either $A=B$ or $\ell =m$ or else the line $(AB)$ passes through the point $\ell \cap m$. See Figure~\ref{fig:coherent-tile}.
\begin{figure}
\centering\includegraphics[width=120pt]{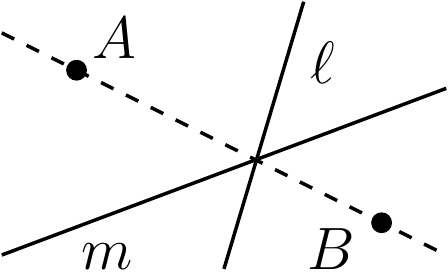}
\caption{Definition of a coherent tile} \label{fig:coherent-tile}
 \end{figure} 

''

Consider a pure $n$-strand braid $\beta$ in $\R{}P^{2}$ as a closed path  in the configuration of pairwise distinct (projective) lines
in $\R{}P^{2}$. We may assume that the initial configuration (collection of lines) is generic
in the sense that no three lines pass through the same point.

Let us fix such a configuration $(l_{1},\cdots, l_{n})$
(no matter which one) and the corresponding cell decomposition of $\R{}P^{2}$ in
the checkerboard fashion.
The dual graph is a quadrilateral splitting of $\R{}P^{2}$, which is bicolored
black and white.

We say that a collection of points and hyperplanes in $\PP$ such that: with each black dot we associate a point in $\PP$,
with each white dot we associate a hyperplane in $\PP$ and
require that the four vertices corresponding to any quadrilateral
form a coherent tile.

We call such an object a {\em coherent configuration of points
and hyperplanes respecting $(l_{1},\dots, l_{n})$.}

A Desargues flip changes one coherent configuration to another.
A (pure) braid in $\R{}P^{2}$ gives rise to a sequence of Desargues flip.

Hence, it gives rise to an action on such collections of flips.

\begin{theorem}
 Isotopic braids give rise to equal actions of braids on configurations of points and hyperplanes in $\PP$.
\end{theorem}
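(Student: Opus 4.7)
The plan is to realize the assignment ``braid $\mapsto$ action on coherent configurations'' as a holonomy on the configuration space of $n$ labelled lines in $\R P^{2}$, and to prove homotopy invariance by a transversality argument followed by a local check at each codimension-two stratum of the discriminant.

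First I would set up the geometric framework. Let $\mathcal{M}_{n}$ denote the space of ordered $n$-tuples of pairwise distinct lines in $\R P^{2}$, and let $\mathcal{D}\subset\mathcal{M}_{n}$ be the closed locus where at least one triple of lines is concurrent. A pure braid $\beta$ is a loop in $\mathcal{M}_{n}$ whose endpoints lie in the open chamber $\mathcal{M}_{n}\setminus\mathcal{D}$. By transversality, within its isotopy class $\beta$ can be perturbed to meet $\mathcal{D}$ transversally in finitely many points, each a generic triple concurrency. At each such crossing the bicoloured quadrilateral tiling undergoes a hexagonal flip, and on coherent configurations this flip is a Desargues flip, well defined as a birational map between the fibres. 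The composition of these flips along $\beta$ is, by construction, the action of $\beta$.

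Second, I would enumerate the codimension-two strata of $\mathcal{D}$ that a generic isotopy between two such generic representatives may cross. Only two local types occur: (I) two disjoint triples of lines become concurrent simultaneously at two distinct points; or (II) four lines pass through a common point. Two generic loops that are isotopic are connected by a generic homotopy meeting only such strata, and two Desargues-flip sequences obtained from the same braid are thereby related by a finite sequence of local moves, one for each stratum crossing. Hence the theorem reduces to showing that the two local Desargues-flip sequences associated with each of (I) and (II) define the same birational transformation of coherent configurations.

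Case (I) is immediate: the two hexagonal flips are supported on disjoint collections of tiles, so the two Desargues flips modify disjoint subsets of the labels and therefore commute. Case (II) is the substantive verification. Near a quadruple concurrency, the link of the codimension-two stratum is a circle subdivided by codimension-one walls into finitely many arcs, each corresponding to a resolution of the quadruple point into a distinct pair of triple points; crossing from one arc to an adjacent one executes a single Desargues flip. One must verify that the composition of the Desargues flips around the full link is the identity. After chasing the incidence relations of the affected tiles, this identity unwinds to the classical Desargues theorem applied to the relevant subconfiguration of four lines through a point.

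The main obstacle I anticipate is the combinatorial bookkeeping in case (II): writing out explicitly which labels of the coherent configuration are modified by each flip and confirming that, after a full cycle around the link, every label returns to its original value; this ultimately reduces to an application of Desargues' theorem. A minor technical point is to verify that the analysis is insensitive to the presence of the line at infinity in $\R P^{2}$, which follows from the projective invariance of the definition of a coherent tile. Once both local checks are established, the braid action factors through $\pi_{1}$ of the stratified configuration space, i.e.\ through the isotopy class of $\beta$, yielding the theorem.
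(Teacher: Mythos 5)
Your overall strategy---realize the braid as a generic loop in the space of ordered $n$-tuples of lines, read off a Desargues flip at each transverse crossing of the discriminant, and prove invariance by classifying the codimension-two events of a generic homotopy---is exactly the strategy of the paper's proof. However, your enumeration of those events is incomplete, and your treatment of the one substantive case is left as an acknowledged obstacle rather than carried out.

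First, a generic homotopy between two generic loops does not only cross codimension-two strata of the discriminant $\mathcal{D}$: it also passes through moments where the loop becomes tangent to a codimension-one stratum, creating or cancelling a pair of mutually inverse wall-crossings. This is the paper's first relation (one Desargues flip performed ``back and forth''), and it needs its own check---easy but not vacuous, since a Desargues flip changes the combinatorial type of the dotted picture (three concurrent lines become three collinear points), so one must verify that the inverse flip reconstructs the original coherent configuration. Your list of local types (I) and (II) omits this case entirely. Second, your case (II)---the full cycle of flips around the link of a quadruple concurrency---is precisely the octogon relation, and it is the heart of the theorem. You correctly identify it and correctly guess that it rests on a Desargues-type incidence theorem, but you explicitly defer the verification (``the main obstacle I anticipate\dots''), so as written the argument is not complete at exactly the point where completeness matters. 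The paper closes this step by invoking Theorem 9.13 of Fomin and Pylavskyy \cite{FominPylavskyy}, which establishes that Desargues flips on coherent configurations satisfy the octogon relation; you would need either to cite that result or to carry out the incidence-chasing you postpone. With the tangency case added and the octogon relation supplied (by citation or by proof), your argument coincides with the paper's.
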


\begin{proof} One can consider a generic isotopy of braids. Standardly,
relations correspond to codimension 2 events (generators correspond to
codimensional one events, which are the flips given above). This gives rise to three types
of transformations.

One of them corresponds to the situation when we perform one
Desargues flip ``back and forth'' (see Fig. \ref{Desarguesflip}).

The other one corresponds to the situation which can be characterised as
``independent events commute'': two flips occur in two non-overlapping hexagons.

Now, we take the statement [Theorem 9.13,page 18] from the paper by Fomin and Pylavskyy.
Desargues flips satisfy the octogon relation (see Fig.\ref{octogon1}).
\begin{figure}
\centering\includegraphics[width=170pt]{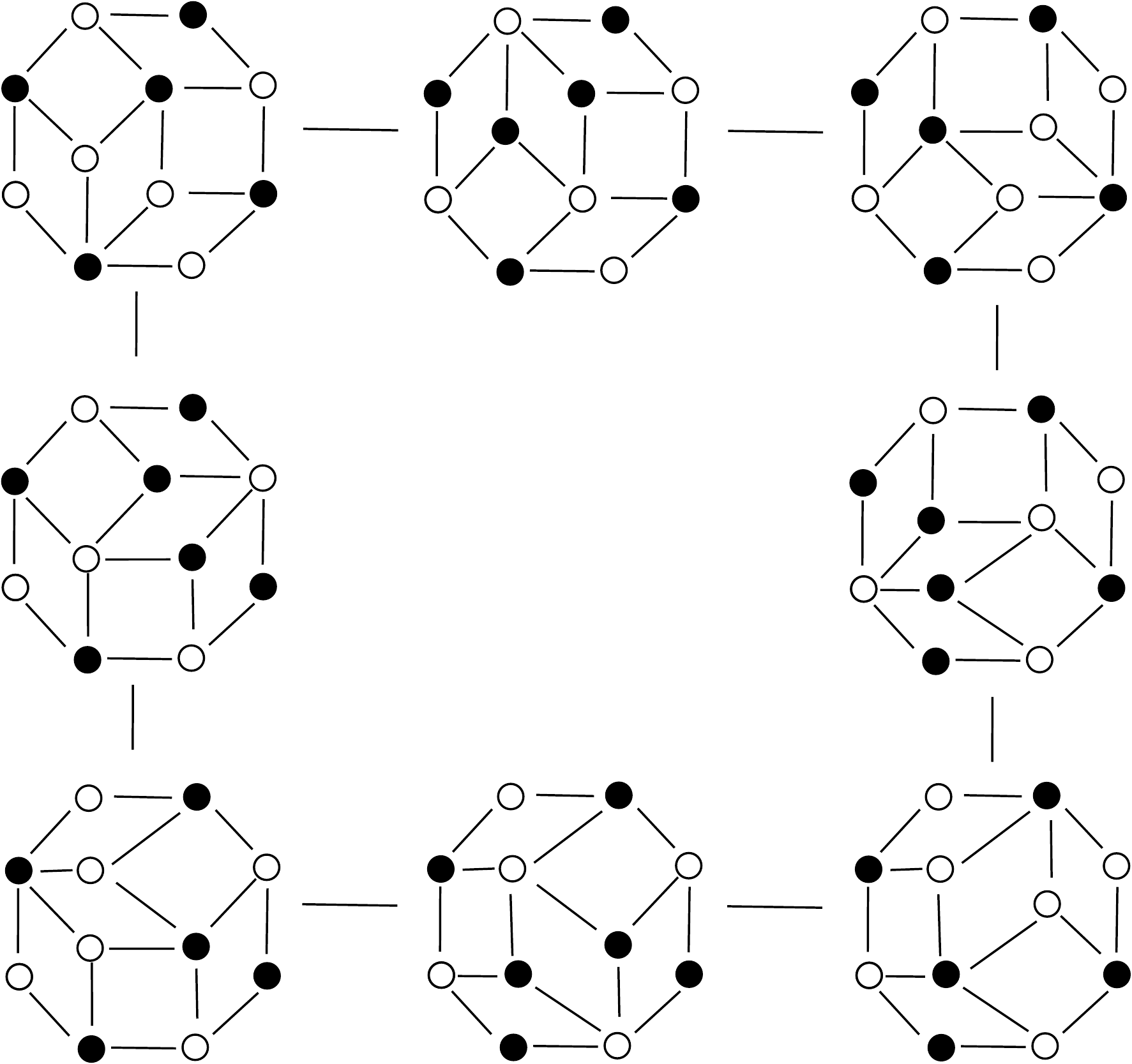}
\caption{The octogon relation}
\label{octogon1}
\end{figure}

In other way of looking at the octogon relation is shown in Fig. \ref{octogon2}.
\begin{figure}
\centering\includegraphics[width=150pt]{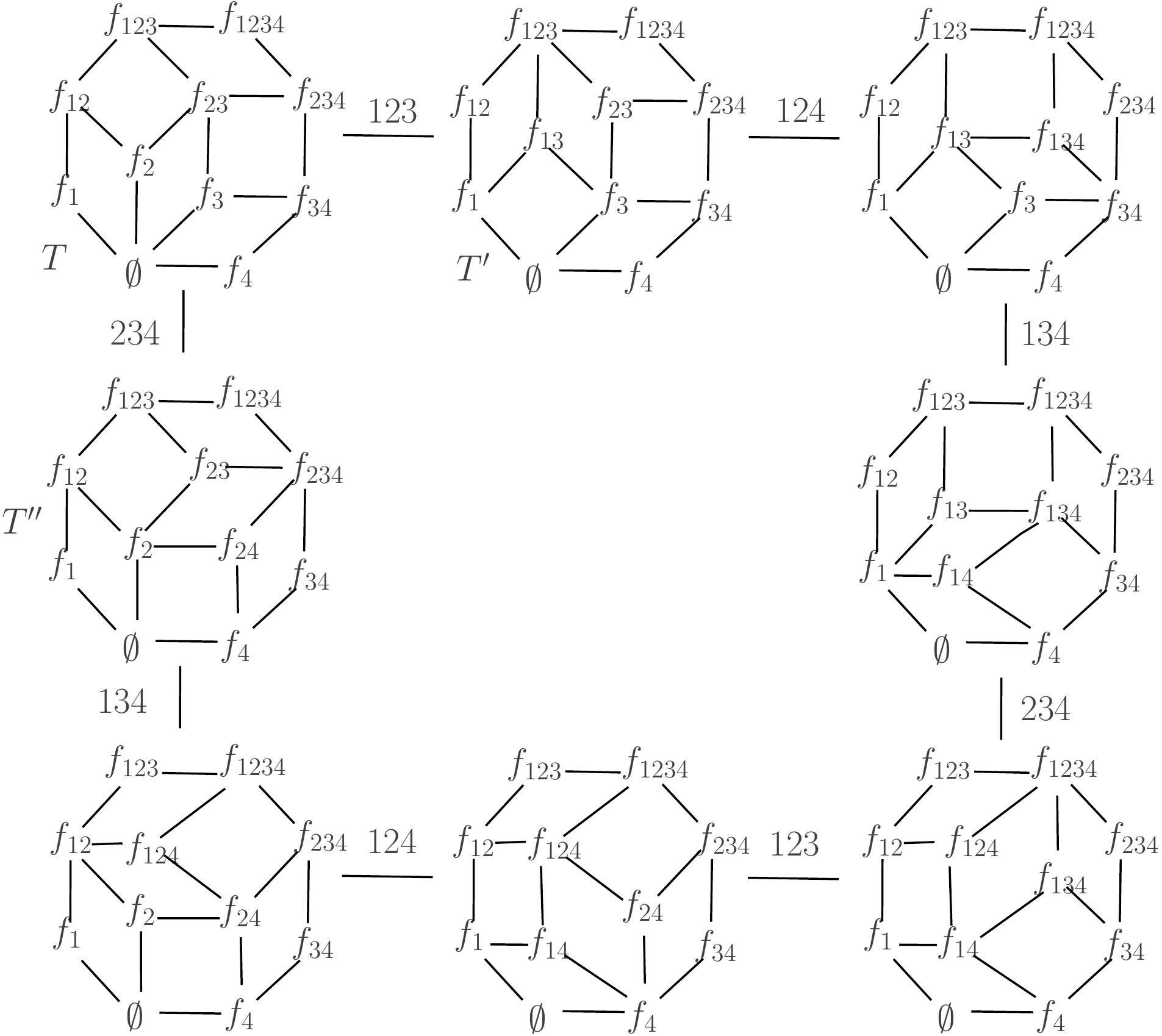}
\caption{The octogon relation from $G_{n}^{3}$ prospective}
\label{octogon2}
\end{figure}

\end{proof}






\section{Just a couple of examples}

In Fig. \ref{4lines} below. we show how four pairwise distinct
lines in $\R{}P^{2}$ can move and pass through triple points.

\begin{figure}

\centering\includegraphics[width=200pt]{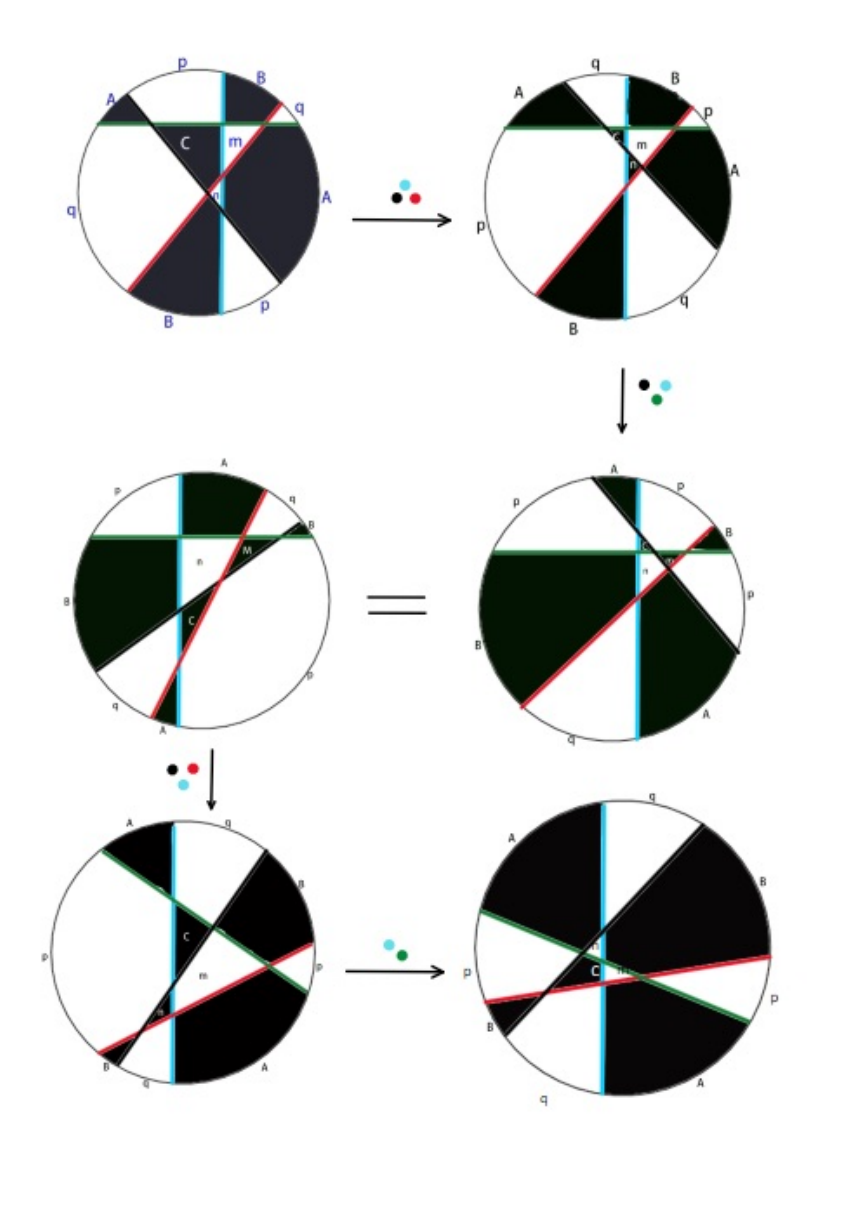}

\caption{Four moves for four lines on $\R{}P^{2}$}
\label{4lines}
\end{figure}

With each such figure we associate a collection of black and
white points corresponding to black and white cells.

Taking the data underlying the tiles corresponding to the
upper left picture, see Fig.\ref{flipflip}, we can transform it four times and get to the
data in the upper right corner.

\begin{figure}

\centering\includegraphics[width=150pt]{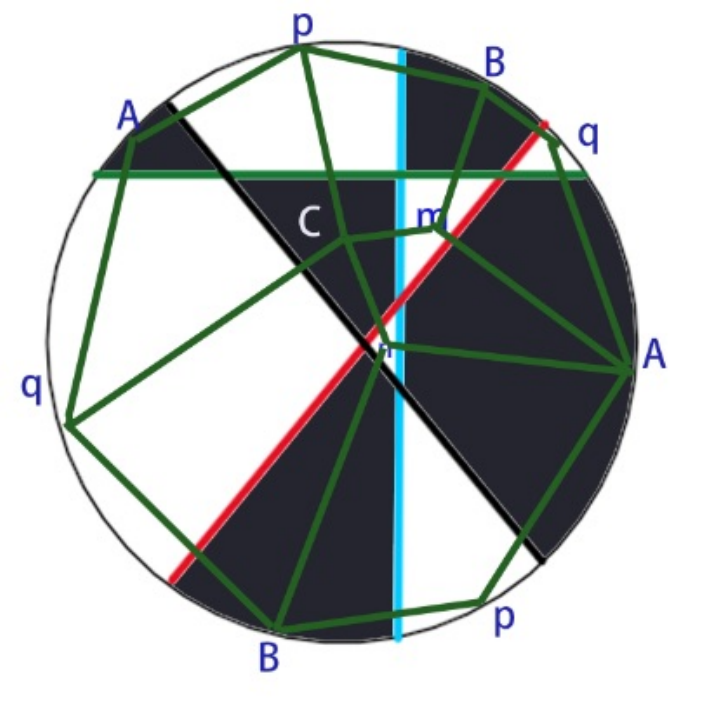}

\caption{Four moves for four lines on $\R{}P^{2}$}
\label{flipflip}
\end{figure}

\section{Further directions: Invariants of higher dimensional manifolds}
In \cite{ManturovNikonov} it was shown that
{\em the same picture (say, pentagon) appears in different areas of mathematics.}

This picture gives rise to a pentagon relation (equation).

In \cite{ManturovWan} it is shown that one can solve equations by looking at (maybe, some other)
other pictures.

Taking the origin of the initial picture (braid groups), we see that the initial
object of study (in our case, braids) acts on other objects which appear under different
names in different areas of mathematics.
This led to a concrete result (braid group action on labelled triangulations).

In \cite{FominPylavskyy} it was demonstrated that theorems about lines

Lines and hyperplanes. Higher analogues of braids (related to $G_{n}^{k},k>4$,
should act on higher analogues of line arrangements.

The further plan is as follows.
Main relation in the $G_{n}^{k+1}$-group: tiling of $\R^{k}$ by hyperplanes.
(2k)-term relation.
Action of higher braids on flags etc.

\section{What should be the ``correct'' form of theorem(s) from the present paper?}

First of all, by no means we pretend to have any ``general picture''. Even the
survey-like paper \cite{FominPylavskyy} did not cover ``all'' theorems about point and line configurations.

We mention some similarity between the principal $G_{n}^{3}$-relation needed for
constructing an invariant of braids, and the octogon relation for configurations
of lines and planes.

From Bourbaki's point of view, it is not even an action: input objects and output
objects are of different sorts: the number of lines and planes changes after the Desargues
flip.

Formally, one can construct a homomorphism from the pure braid group to
performing lots of accurate calculations similar to \cite{MNGn3}.

\section{Acknowledgements}
I am very grateful to Igor Mikhailovich Nikonov, Louis Hirsch Kauffman and Seongjeong Kim for permanent
discussion of my current work.

I am extremely grateful to Seongjeong Kim and Zichang Han for their help in preparing the text.

I am indebted to Clifford Henry Taubes for pointing to the paper by S.V.Fomin and
P.Pylavskyy.

\end{document}